\documentclass[11pt]{article}
\usepackage[a4paper,margin=1.0in]{geometry}
\usepackage[colorlinks,citecolor=magenta,linkcolor=black]{hyperref}
\pdfpagewidth=\paperwidth \pdfpageheight=\paperheight
\usepackage{amsfonts,amssymb,amsthm,amsmath,eucal,tabu,url}
\usepackage{fancyvrb}
\usepackage{pgf}
 \usepackage{array}
 \usepackage{tikz-cd}
 \usepackage{pstricks}
 \usepackage{pstricks-add}
 \usepackage{pgf,tikz}
 \usetikzlibrary{automata}
 \usetikzlibrary{arrows}
 \usepackage{indentfirst}
 \pagestyle{myheadings}
\usepackage{tabularx} 


\theoremstyle{plain}
\newtheorem{thm}{Theorem}[section]
\newtheorem{theorem}[thm]{Theorem}

\newtheorem{corollary}[thm]{Corollary}

\theoremstyle{definition}
\newtheorem{definition}[thm]{Definition}
\newtheorem{remark}[thm]{Remark}
\newtheorem{example}[thm]{Example}

\newtheorem{thevarthm}[thm]{\varthmname}

\newenvironment{varthm*}[1]{\trivlist\item[]{\bf #1.}\it}{\endtrivlist}

\theoremstyle{plain}
\newtheorem{custom}{{\rm Theorem}}


\renewcommand\geq{\geqslant}

\renewcommand\leq{\leqslant}

\newcommand\be{\begin{eqnarray*}}
\newcommand\ee{\end{eqnarray*}}

\newcommand\calo{{\mathcal O}}
\newcommand\calc{{\mathcal C}}

\newcommand\newop[2]{\def#1{\mathop{\rm #2}\nolimits}}
\newop\log{log}
\newop\ord{ord}
\newop\Gal{Gal}
\newop\SL{SL}
\newop\Bl{Bl}
\newop\mult{mult}
\newop\mass{mass}
\newop\div{div}
\newop\codim{codim}
\newop\sing{sing}
\newop\vdim{vdim}
\newop\edim{edim}
\newop\Ass{Ass}
\newop\size{size}
\newop\reg{reg}
\newop\satdeg{satdeg}
\newop\supp{supp}
\newop\Neg{Neg}
\newop\Nef{Nef}
\newop\Nefh{Nef_H}
\newop\Eff{Eff}
\newop\Zar{Zar}
\newop\MB{MB}
\newop\MBxC{MB\mathit{(x,C)}}
\newop\NnB{NnB}
\newop\Bigg{Big}
\newop\Effbar{\overline{\Eff}}

\def\keywordname{{\bfseries Keywords}}%
\def\keywords#1{\par\addvspace\medskipamount{\rightskip=0pt plus1cm
\def\and{\ifhmode\unskip\nobreak\fi\ $\cdot$
}\noindent\keywordname\enspace\ignorespaces#1\par}}
\def\subclassname{{\bfseries Mathematics Subject Classification
(2020)}\enspace}
\def\subclass#1{\par\addvspace\medskipamount{\rightskip=0pt plus1cm
\def\and{\ifhmode\unskip\nobreak\fi\ $\cdot$
}\noindent\subclassname\ignorespaces#1\par}}

\begin{document}
\title{Construction of free arrangements using point-line operators}
\author{Piotr Pokora and Xavier Roulleau}
\date{\today}
\maketitle

\thispagestyle{empty}
\begin{abstract}
We construct new examples of free curve arrangements in the complex projective plane using point-line operators recently defined by the second author. In particular, we construct a new example of a conic-line arrangement with ordinary quasi-homogeneous singularities that has non-trivial monodromy. 
\keywords{14N25, 32S22, 14C20, 52C35}
\subclass{conic-line arrangements, plane curve singularities, monodromy, Lefschetz properties, unexpected curves, moduli spaces of hyperplane arrangements}
\end{abstract}
\section{Introduction}
The main aim of this paper is to present a new idea of constructing curve arrangements in the complex projective plane with particularly nice properties via the point-line operators recently introduced by the second author in \cite{XR1}.

Our motivation stems from several open questions in the theory of plane curves. Specifically, we are interested in problems related to constructing free plane curves, particularly in the context of the classical and Numerical Terao freeness conjecture \cite{CP, Terao}. From an algebraic combinatorics perspective, researchers attempt to construct hyperplane arrangements with many symmetries that are free, for instance by looking at reflection groups \cite{Terao1}. In combinatorics, interesting approaches to constructing families of free hyperplane arrangements involve examining the properties of intersection lattices. For example, it is known that arrangements with supersolvable intersection lattices are free \cite{Jambu}. In the context of curve arrangements, effective methods of constructing free curves include using pencils of curves \cite{King} or examining special classes of curve arrangements, for instance only with quasi--homogeneous singularities \cite{Schenck}. The main goal of our paper is to present a new method for constructing free plane curves using point-line operator techniques, denoted by  $\Lambda_{\mathfrak{n},\mathfrak{m}}$. We will introduce these techniques in more detail now. 

We consider a line arrangement $\mathcal{L} = \{\ell_{1}, \ldots, \ell_{n}\} \subset \mathbb{P}^{2}_{\mathbb{C}}$ and we fix $\mathcal{D}$ as the dual operator between $\mathbb{P}^{2}_{\mathbb{C}}$ and $\check{\mathbb{P}}^{2}_{\mathbb{C}}$, which to a given line arrangement $\mathcal{L}$ associates an arrangement of points, or more concretely the normals of the lines in $\mathcal{L}$. More precisely, for a line $\ell : ax + by + cz=0$ in $\mathcal{L}$ we have $\mathcal{D}(\ell) = (a:b:c) \in \check{\mathbb{P}}^{2}_{\mathbb{C}}$.

Fix a 
subset $\mathfrak{n} \subset \mathbb{Z}_{\geq 2}$, we define the operator $\mathcal{D}_{\mathfrak{n}}(\mathcal{L})$ that sends the line arrangement $\mathcal{L}$ to the line arrangement in the dual plane which is the union of the lines containing exactly $n$ points of $\mathcal{D}(\mathcal{L})$ for $n \in \mathfrak{n}$. We define our point-line operator as
$$\Lambda_{\mathfrak{n},\mathfrak{m}} =\mathcal{D}_{\mathfrak{n}}\circ \mathcal{D}_{\mathfrak{m}}.$$ 
For instance, if we apply $\Lambda_{\{2\},\{k\}}$ to a line arrangement $\mathcal{L}$, then the result is the union of lines that contain exactly $k$ double points of $\mathcal{L}$. It is worth pointing out that the result of applying $\Lambda_{\mathfrak{n},\mathfrak{m}}$ to $\calc$ for some choice of $\mathfrak{n}, \mathfrak{m}$ might be empty. 
\begin{example}
Let us consider an arrangement $\mathcal{L} \subset \mathbb{P}^{2}_{\mathbb{C}}$ consisting of $5$ generic lines in the plane, i.e., this arrangement has only $10$ double points as the intersections. If we now apply $\Lambda_{\{2\},\{3\}}$ to $\mathcal{L}$, then 
$$\Lambda_{\{2\},\{3\}}(\mathcal{L}) = \emptyset,$$
and this follows from the genericity assumption, so except lines in the arrangement that contain $4$ points from $\mathcal{D}(\mathcal{L})$, other lines are \emph{ordinary}, i.e., such an ordinary line contains exactly $2$ points from $\mathcal{D}(\mathcal{L})$.
\end{example}
For two integers $m,n\geq 2$, instead of $\Lambda_{\{n\geq 0\},\{m\geq 0\}}$, we 
will use the notation $\Lambda_{n,m}$, i.e.,  this is the operator that returns the line 
arrangement which is the union of the lines containing at least $m$ points of 
multiplicity at least $n$ in a given line arrangement $\mathcal{L}$.

For a line arrangement $\mathcal{L}$ and $k \in \mathbb{Z}_{\geq 2}$ let us denote by $n_{k} = n_{k}(\mathcal{L})$ the number of $k$-fold intersection points, i.e., points where exactly $k$ lines from the arrangement $\mathcal{L}$ meet, and for a point configuration $\mathcal{D}(\mathcal{L})$ and $r \in \mathbb{Z}_{\geq 2}$ we denote by $l_{r}$ the number of $r$-rich lines, i.e., lines in the dual plane $\check{\mathbb{P}}^{2}_{\mathbb{C}}$ that contain exactly $r$ points from $\mathcal{D}(\mathcal{L})$. 

In particular, $2$-rich lines are just ordinary lines. We should note that we will skip $\mathcal{L}$ when talking about $l_{r}$ and $n_{k}$ if it does not cause confusion.

We want to say a few words about the freeness of curves. Let $S := \mathbb{C}[x,y,z]$ be the graded polynomial ring and for a homogeneous polynomial $f \in S$ we define its Jacobian ideal as $$J_{f} = \langle \partial_{x}\, f, \partial_{y} \, f, \partial_{z} \, f \rangle.$$
\begin{definition}
\label{hom}
Let $C : f=0$ be a reduced curve in $\mathbb{P}^{2}_{\mathbb{C}}$ of degree $d$ given by $f \in S$. Denote by $M(f) := S/ J_{f}$ the associated Milnor algebra. We say that curve $C$ is $m$-syzygy if $M(f)$ has the following minimal graded free resolution:
$$0 \rightarrow \bigoplus_{i=1}^{m-2}S(-e_{i}) \rightarrow \bigoplus_{i=1}^{m}S(1-d - d_{i}) \rightarrow S^{3}(1-d)\rightarrow S \rightarrow M(f) \rightarrow 0$$

with $e_{1} \leq e_{2} \leq \ldots \leq e_{m-2}$ and $1\leq d_{1} \leq \ldots \leq d_{m}$. 
The $m$-tuple $(d_{1}, \ldots, d_{m})$ is called the set of exponents of $C$.
\end{definition}
\begin{definition}
We say that a reduced plane curve $C$ is \textbf{free} if and only if $C$ is $2$-syzygy and then we have $d_{1}+d_{2}=d-1$.
\end{definition}
Next, we need to recall the notion of the Alexander polynomial. Let $C : f=0$ be a reduced plane curve of degree $d$. Consider the complement $U:=\mathbb{P}^{2}_{\mathbb{C}}\setminus C$  and let $F : f=1$ be the corresponding Milnor fibre in $\mathbb{C}^{3}$ with the usual monodromy action 
$h : F \rightarrow F$. We consider the characteristic polynomials of the monodromy, namely
$$\triangle_{C}^{j}(t) = {\rm det}( t \cdot {\rm Id}-h^{j} \,| \, H^{j}(F,\mathbb{C})\}$$
for $j \in\{1,2,3\}$. When the curve $C$ is reduced, then $\triangle_{C}^{0}(t) = t-1$ and we have the following identity
\begin{equation}
\triangle_{C}^{0}(t)\triangle_{C}^{1}(t)\triangle_{C}^{2}(t) = (t^{d}-1)^{\chi(U)}.
\end{equation}
Here $\chi(U)$ is the Euler characteristic of the complement that can be computed as follows
$$\chi(U) = (d-1)(d-2) +1 -\mu(C),$$
where $\mu(C)$ is the total Milnor number of $C$. The polynomial $$\triangle(t) :=\triangle_{C}^{1}(t)$$
is called the Alexander polynomial of $C$.

Having our preparation done, we can sketch the main results of our paper, which are devoted to the constructions of new free arrangements of rational curves. It is worth recalling here that it is a notoriously difficult question to construct free arrangements, and especially when the number of irreducible components is expected to be large. We overcome this difficulty and show how to construct free arrangements consisting of many curves as irreducible components. Our first construction is based on the classical Hesse arrangement $\mathcal{H}$ consisting of $12$ lines with $n_{2}(\mathcal{H}) = 12$ and $n_{4}(\mathcal{H})=9$. Applying the operator $\Lambda_{2,2}$, i.e., this operator returns the line arrangement, which is the union of the lines containing at least two points of multiplicity at least two in a given arrangement $\mathcal{L}$, to the Hesse arrangement of lines we obtain the following. 
\begin{custom}
There exists a rigid arrangement $\mathcal{H}_{57}$ in the complex projective plane consisting of $57$ lines such that
$$n_{2} = 252, \quad n_{3} = 108, \quad n_{4} = 72, \quad n_{8}=21.$$
The arrangement $\mathcal{H}_{57}$ is free with exponents $(d_{1},d_{2}) = (25,31)$.\\
Moreover, the Alexander polynomial of $\mathcal{H}_{57}$
has the form
$$\triangle(t) = (t-1)^{56}.$$
\end{custom}
Here we say that a line arrangement is rigid if the moduli space of line arrangements with the same incidences between the lines is zero-dimensional. 

Next, we can apply our point-line operators to certain symmetric arrangements of lines with only double points. Let $\mathcal{C}_{8}$ be the arrangements consisting of lines determined by the sides of a regular octagon. Obviously our arrangement $\mathcal{C}_{8}$ consists of $8$ lines and $28$ double intersections. If we apply operator $\Lambda_{\{2\},3}$ to arrangement $\mathcal{C}_{8}$, i.e., if we take the union of the lines that are at least $3$-rich of double points, we obtain the following line arrangement $\mathcal{O}_{33}$.
\begin{custom}
There exists a rigid arrangement $\mathcal{O}_{33}$ in the real projective plane consisting of $33$ lines such that
$$n_{2} = 108, \quad n_{3} = 40, \quad n_{5} = 16, \quad n_{8}=5.$$
The arrangement $\mathcal{O}_{33}$ is free with exponents $(d_{1},d_{2}) = (15,17)$. 
\end{custom}
Moreover, we explain how to use the geometry of regular $n$-gons to construct further examples of free arrangements consisting of $49$ and $61$ lines. It is worth emphasizing that in our constructions using regular $n$-gons we obtain rigid arrangements, which means that the moduli spaces are zero dimensional. 

In the next step we construct a conic-line arrangement in the real projective plane by using the geometry of the indeterminacy locus of a certain point-line operator. As a result we obtain the following.
\begin{custom}
There exists an arrangement $\mathcal{CL}$ of $6$ lines and $6$ conics in the real projective plane such that
$$n_{2} = 12,\quad n_{6} = 9.$$
The arrangement $\mathcal{CL}$ is free with exponents $(d_{1}, d_{2}) = (4,13)$. Moreover, the Alexander polynomial of $\mathcal{CL}$ is non-trivial, i.e.,
$$\triangle(t) = (t^3+1)^{4}(t^{2}+t+1)^{2}(t-1)^{11}.$$
\end{custom}

We should also note here that we will discuss some additional interesting geometric properties determined by our newly constructed line arrangements, namely we will explain why point configurations dual to lines in arrangements $\mathcal{H}_{57}$ and $\mathcal{O}_{33}$ admit unexpected curves, and we will study the Strong Lefschetz Property for algebras determined by these arrangements.

\section{Constructions of curve arrangements}
Before we present proofs of our results, let us outline main techniques that we are going to use here. Our constructions use symbolic computations that are performed in \verb}SINGULAR}, \verb}MAGMA} and \verb}OSCAR}. We provide calculation scripts in both programs in \textbf{Ancillary File} to our paper that is available on \textbf{arXiv}. In order to check the freeness property, we compute the minimal free resolutions of the associated Milnor algebras, and this can be performed in any symbolic computations software. Now we would like to outline the way to compute the Alexander polynomials, which is an involving procedure. Since our arrangements are free, we can use \verb}SINGULAR} script provided in \cite{DimcaSc}. Let us present a short description of this procedure. Let
$$\alpha_{q} = {\rm exp}(-2\pi \iota q/d)$$
be a root of unity of order $d = {\rm deg}(C)$ with $0 \leq q \leq d$, and let us denote by $m(\alpha_{q})$ the multiplicity of $\alpha_{q}$ as a root of the Alexander polynomial $\triangle_{C}^{1}(t)$. One has $\alpha_{0} = \alpha_{d}=1$ and 
$$m(1) = b_{1}(U) = r-1,$$
where $r$ is equal to the number of irreducible components of $C$. Now, using script $\verb}monof3}(q_{1},q_{2})$ provided in \cite{DimcaSc}, we take $q_{1} = 3$, $q_{2}=d$, and as an output we get a table, where in the first column we have values of $q$ from $3$ to $d$, and the third column contains the data $n_{2}(q)$. Set $n_{2}(0)=n_{2}(1)=n_{2}(2)=0$, then we have the following identity
\begin{equation}
\label{multmo}
m(\alpha_{q}) = n_{2}(q) + n_{2}(d-q) \text{ for any } q \in \{0, \ldots , d\}.
\end{equation}
Using the above description, we can find our Alexander polynomial of $C$, and for more details regarding such computations, we refer to \cite{Symb}.

In order to study moduli spaces, and their rigidity, we need to recall basic results on matroids. 
\begin{definition}
A matroid $M$ consists of a finite set $E$ and a non-empty collection $\mathcal{B} \subset 2^{E}$ that satisfies the Steinitz exchange axiom, namely for each pair $A,B$ of distinct elements $\mathcal{B}$ and $x \in A \setminus B$ there is an element $y \in B \setminus A$ such that $(A\setminus\{x\})\cup \{y\} \in \mathcal{B}$.
\end{definition}
The elements in $\mathcal{B}$ are called bases and they have the same number $r$ of elements. Subsets of order $r$ of $E$ are called non-bases. Consider a vector configuration $v_1,\dots,v_n$ such that no two vectors are proportional, and consider the vectors as the columns of a $r \times n$ matrix $X$ -- we suppose $X$ has full-rank. The matroid for this configuration, denote here by $M[X]$, is the matroid whose ground set is $\{1, \ldots, n\}$ and its bases are the sets of $r$ columns that are of full-rank. We may view the columns as points in the projective space $\mathbb{P}^{r-1}$, so $X$ can be viewed as a geometric projective realization of $M$. 
\begin{definition}
Let $\mathbb{F}$ be a field. A matroid $M$ is $\mathbb{F}$-realizable if there exists a matrix $X$ with entries in the field $\mathbb{F}$ such that $M \cong M[X]$. If $M$ is $\mathbb{F}$-realizable for some field $\mathbb{F}$, then $M$ is said to be realizable. 
\end{definition}
Determining whether a matroid can be realized geometrically over a given field $\mathbb{F}$ is a very difficult problem. In this context, we can defined the following crucial object.
\begin{definition}
Given a field $\mathbb{F}$ and a matroid $M$, its realization space $\mathcal{R}(M;\mathbb{F})$ is a (possibly empty) algebraic variety (or scheme) defined over $\mathbb{F}$, whose closed points parametrize equivalence classes of point configurations in $\mathbb{P}^{r-1}_{\mathbb{F}}$ whose matroid is $M$, where we say that two configurations are equivalent if 
one can be transformed to the other by an element of ${\rm PGL}_{r}(\mathbb{F})$. In particular, matroid $M$ is $\mathbb{F}$-realizable if and only if $\mathcal{R}(M,\mathbb{F}) \neq \emptyset$.
\end{definition}
Now we explain how to construct the moduli spaces of line arrangements step by step. For this purpose, let $M=(\{1, \ldots ,n\}, \mathcal{B})$ be a simple matroid of rank $3$. A realization of $M$ over a given field $\mathbb{F}$ is a matrix $X 
\in \mathbb{F}^{3\times n}$ such that for all subsets $P \subset \{1, \ldots ,n\}$ of size $3$ we have
$$(\star) : \quad {\rm det} \, X_{P} \neq 0 \iff P \in \mathcal{B},$$
where $X_{P}$ is the $3\times 3$ submatrix consisting of the columns indexed by $P$. The kernels of the linear forms given by the columns of $P$ define an arrangement $\mathcal{L}$ of $n$ lines in $\mathbb{P}^{2}_{\mathbb{F}}$ whose intersection lattice is isomorphic to the lattice of flats of $M$. Observe that the condition $(\star)$ defines an ideal $I'$ in the ring $R\ = R[d]$, where
$$R = \mathbb{Z}[x_{ij} \, : \, i \in \{1,2,3\}, j \in \{1, \ldots , n\}],$$
given by
$$I' = \langle {\rm det}(X_{N}) \, : \, N \subset E \text{ is not a basis}, \, |N| = 3 \rangle + \bigg\langle 1 - d\prod_{B \in \mathcal{B}}{\rm det} X_{B} \bigg\rangle < R[d],$$
where $X = (x_{ij}) \in \mathbb{F}^{3\times n}$ is an $3\times n$ matrix having the variables $x_{ij}$ as the entries. Using the scheme-theoretic language, the realization space is an affine scheme that is described by 
$$\mathcal{R}(M;\mathbb{F}) := V(I') \subset \mathbb{A}^{3n+1}_{\mathbb{F}}={\rm Spec} \, R[d] \rightarrow {\rm Spec} \, \mathbb{Z}.$$
This description explains, heuristically, how to find the realization space, but this procedure, in practice, is rather involving. In order to find a realization space of a given line arrangement, we will follow the lines in \cite{CKS} and all the calculations can be done using \verb}OSCAR}. Using this approach, we are able to check that realization spaces of our line arrangements are zero-dimensional, and all necessary details regarding implementation of that procedure in our cases is described in the aforementioned \textbf{Ancillary File}.

After such a condensed introduction, we present proofs of our results.

Let us start with Theorem \textbf{A}.
\begin{proof}
Our starting point is the Hesse arrangement $\mathcal{H}$ consisting of $12$ lines which has $n_{2}=12$ and $n_{4}=9$ as the intersection points. Recall the defining equation of our arrangement $\mathcal{H}$:
\begin{multline*}
Q(x,y,z) = xyz(x+y+z)(x+y+e z)(x+y+e^{2} z)(x+e y +z)(x + e^{2} y +z)(e x + y + z) \\ (e^{2}x + y+z)(ex + e^{2}y + z)(e x + y + e^{2}z) 
\end{multline*}
where $e^{2}+e+1=0$. Now we apply the point-line operator $\Lambda_{2,2}$ to $\mathcal{H}$, i.e., this operator returns the line arrangement which is the union of the lines containing at least two points of multiplicity at least two in $\mathcal{H}$, so this is the less demanding variant among point-line operators. As a result of the action of this operator, we obtain a line arrangement $\mathcal{H}_{57}$ of $57$ lines with the intersection points 
$$n_{2} = 252, \quad n_{3} = 108, \quad n_{4} = 72, \quad n_{8}=21.$$
For the completeness of the paper, let us present the resulting line arrangement by providing the equations of lines:

\[
\begin{tabu}{llll}
\ell_{1}: & \, x + (-e - 1)y + ez  = 0, & \ell_{2}: & \, y + z = 0,    \\
\ell_{3}: & \, x - 2ey + ez = 0,        & \ell_{4}: & \, x + ey - 2ez  = 0, \\
\ell_{5}: & \, x + ey + (2e + 2)z = 0,  & \ell_{6}: & \, x + (-e - 1)z = 0, \\
\ell_{7}: & \, x + ey - 2z  = 0,        & \ell_{8}: & \, x - \frac{1}{2}ey - \frac{1}{2}ez = 0, \\
\ell_{9}: & \, x + y - 2z = 0,          &\ell_{10}: & \, x - \frac{1}{2}y - \frac{1}{2}z  = 0,  \\
\ell_{11}: & \, y + ez = 0,             &\ell_{12}: & \, x + (e + 1)y = 0, \\
\ell_{13}: & \, x + \frac{1}{2}(e + 1)y - \frac{1}{2}ez  = 0,    & \ell_{14}: & \, x + (2e + 2)y + ez  = 0, \\
\ell_{15}: & \, x + (2e + 2)y + (-e - 1)z = 0, &\ell_{16}: & \, x + ey  = 0, \\
\ell_{17}: & \, x - \frac{1}{2}y + \frac{1}{2}(e + 1)z  =0, & \ell_{18}: & \, z = 0, \\
\ell_{19}: & \, x + y + z  = 0,       & \ell_{20}: & \, x + y + ez  = 0, \\
\ell_{21}: & \, x - ez = 0, & \ell_{22}: & \, x + ey + ez  = 0, \\
\ell_{23}: & \, x - y = 0, & \ell_{24}: & \, x + ey + z  = 0, \\
\ell_{25}: & \, x + (-e - 1)y - 2ez = 0, & \ell_{26}: & \, x + ey + (-e - 1)z  = 0, \\
\ell_{27}: & \, x + (-e - 1)y + z = 0, & \ell_{28}: & \, x - 2y + ez  = 0, \\
\ell_{29}: & \, x + (-e - 1)y + (-e - 1)z = 0, & \ell_{30}: & \, x + y + (-e - 1)z  = 0, \\
\ell_{31}: & \, x - ey = 0, & \ell_{32}: & \, y  = 0, \\
\ell_{33}: & \, y + (e + 1)z = 0, & \ell_{34}: & \, x + (e + 1)z  = 0, \\
\ell_{35}: & \, x + y + (2e + 2)z = 0, & \ell_{36}: & \, x + \frac{1}{2}(e + 1)y - \frac{1}{2}z  = 0, \\
\ell_{37}: & \, x - \frac{1}{2}ey + \frac{1}{2}(e + 1)z = 0, & \ell_{38}: & \, y - ez  = 0, \\
\ell_{39}: & \, x - 2y + z = 0, & \ell_{40}: & \, x = 0, \\
\ell_{41}: & \, y - z = 0, & \ell_{42}: & \, x - z = 0, \\
\ell_{43}: & \, x + y = 0, & \ell_{44}: & \, x - 2ey + z = 0, \\
\ell_{45}: & \, x - 2y + (-e - 1)z = 0, & \ell_{46}: & \, x - 2ey + (-e - 1)z = 0, \\
\ell_{47}: & \, y + (-e - 1)z = 0, & \ell_{48}: & \, x + ez = 0, \\
\ell_{49}: & \, x + (2e + 2)y + z = 0, & \ell_{50}: & \, x + (-e - 1)y - 2z = 0, \\
\ell_{51}: & \, x + (-e - 1)y = 0, & \ell_{52}: & \, x + y - 2ez = 0, \\
\ell_{53}: & \, x + z = 0, & \ell_{54}: & \, x - \frac{1}{2}y - \frac{1}{2}ez = 0, \\
\ell_{55}: & \, x + (-e - 1)y + (2e + 2)z = 0, & \ell_{56}: & \, x + \frac{1}{2}(e + 1)y + \frac{1}{2}(e + 1)z = 0, \\
\ell_{57}: & \, x - \frac{1}{2}ey - \frac{1}{2}z = 0.&  & 
\end{tabu}
\]

Using the strategy presented above and symbolic computations in \verb}OSCAR}, we can check that the realization space of $\mathcal{H}_{57}$ is zero-dimensional, so our arrangement is rigid.

We verify the freeness of the arrangement. Using \verb}SINGULAR}, we can compute the minimal free resolution of the associated Milnor algebra, which is of the following form:
$$0\rightarrow S(-87) \oplus S(-81) \rightarrow S^{3}(-56) \rightarrow S,$$
and hence $\mathcal{H}_{57}$ is free with the exponents $(25,31)$. 

Using script \verb}mono3f} described in \cite{DimcaSc}, we can compute the Alexander polynomial of $\mathcal{H}_{57}$, namely
$$\triangle(t) = (t-1)^{56},$$
which means that the monodromy is trivial.
\end{proof}
\begin{remark}
It is worth recalling here that the Alexander polynomial of the Hesse arrangement $\mathcal{H}$ of $12$ lines has the form
$$\triangle(t) = (t-1)^{9}(t^{4}-1)^{2},$$ 
see \cite[Theorem 1.7]{DimMon}. 
This means that point-line operators do not automatically produce new line arrangements with non-trivial monodromy once we obviously start with arrangements with non-trivial monodromy. We should emphasize that line arrangements with non-trivial monodromy are very rare, and we have no global methods or simple criteria that can detect potential candidates for such line arrangements.
\end{remark}

Now we focus on arrangements that can be defined over the real numbers. Arrangements of lines defined over the reals attract the attention of many researchers working on combinatorial problems involving matroids and configurations. Here we focus only on the freeness property and present a construction based on a regular octagon. However, this idea can be further extended to regular decagons and dodecagons, see Remark \ref{Ocon} below.

We present here our proof of Theorem \textbf{B}.
\begin{proof}
This construction uses the geometric properties of regular $n$-gons. We start with the following regular octagon $\mathcal{C}_{8}$ which is given by the following defining equation:
\begin{multline*}
Q(x,y,z) = \bigg(x + (r - 1)y - z\bigg)\bigg(x + (r + 1)y + (-r - 1)z\bigg)\bigg(x + (-r - 1)y + (r + 1)z\bigg) \\
\bigg(x + (-r + 1)y + z\bigg)\bigg(x + (r - 1)y + z\bigg)\bigg(x + (r + 1)y + (r + 1)z\bigg)\\
\bigg(x + (-r - 1)y + (-r - 1)z\bigg) \bigg(x + (-r + 1)y - z\bigg). 
\end{multline*}
where $r^2-2=0$. When we say that our arrangement is a regular octagon, we mean that the lines are extensions of the sides of a regular octagon; all intersections are just double points and we have exactly $28$ such intersections. We apply $\Lambda_{\{2\},3}$ to arrangement $\mathcal{C}_{8}$, i.e., we take the union of the lines that are at least $3$-rich. As a result of this operation, we get the arrangement $\mathcal{O}_{33}$ consisting of $33$ lines and 
$$n_{2} = 108, \quad n_{3} = 40, \quad n_{5} = 16, \quad n_{8}=5,$$
which can be verified by \verb}SINGULAR}. 

We present below all the equations of $33$ lines building the arrangement $\mathcal{O}_{33}$.
\[
\small
\begin{tabu}{llll}
\ell_{1}: & \, x + (-r + 1)y - z  = 0,                                        &\ell_{2}: & \, x + (r + 1)y + (-2r - 3)z = 0,    \\
\ell_{3}: & \, x + y = 0,                                                     &\ell_{4}: & \, x + (-r - 1)y  = 0, \\
\ell_{5}: & \, x + (-r + 1)y = 0,                                             &\ell_{6}: & \, x + (r - 1)y + (r + 1)z= 0, \\
\ell_{7}: & \,   y = 0,                                                       &\ell_{8}: & \,  x + (-r + 1)y + (-r - 1)z = 0, \\
\ell_{9}: & \,   x + (r - 1)y + (r - 1)z = 0,                                 &\ell_{10}: & \,  x + (r + 1)y - z = 0,  \\
\ell_{11}: & \,   x + (r + 1)y + z= 0,                                        &\ell_{12}: & \,  x + (r + 1)y + (-r - 1)z = 0, \\
\ell_{13}: & \,   x + (r - 1)y - z = 0,                                       &\ell_{14}: & \, x + (-r - 1)y + (-r - 1)z= 0, \\
\ell_{15}: & \,   x + (r + 1)y + (2r + 3)z  = 0,                              &\ell_{16}: & \,  x + (r - 1)y + (-r - 1)z = 0, \\
\ell_{17}: & \,  x + (-r - 1)y + (-2r - 3)z = 0,                              &\ell_{18}: & \, x + (-r - 1)y + z = 0, \\
\ell_{19}: & \,  x + (-r + 1)y + (r - 1)z = 0,                                &\ell_{20}: & \,  x + (-r - 1)y + (r + 1)z= 0, \\
\ell_{21}: & \,  x + (r - 1)y + z = 0,                                        &\ell_{22}: & \, x + (-r + 1)y + z  = 0, \\
\ell_{23}: & \,  x = 0,                                                       &\ell_{24}: & \,  x + (r + 1)y + (r + 1)z = 0, \\
\ell_{25}: & \,  x + (r - 1)y = 0,                                            &\ell_{26}: & \,  x + (r + 1)y = 0, \\
\ell_{27}: & \,  x + (-r + 1)y + (-r + 1)z = 0,                               &\ell_{28}: & \,  x + (-r - 1)y - z = 0, \\
\ell_{29}: & \,  x - y = 0,                                                   &\ell_{30}: & \,  x + (r - 1)y + (-r + 1)z = 0, \\
\ell_{31}: & \,  z = 0,                                                       &\ell_{32}: & \,  x + (-r + 1)y + (r + 1)z = 0, \\
\ell_{33}: & \,  x + (-r - 1)y + (2r + 3)z = 0. &  & 
\end{tabu}\]
Having equations in hand, we can verify using \verb}OSCAR} that our arrangement is rigid, i.e., the realization space is zero-dimensional, and in the next step we can check the freeness property. Using \verb}SINGULAR}, we can compute the minimal free resolution of the associated Milnor algebra, which is of the following form:
$$0\rightarrow S(-49) \oplus S(-47) \rightarrow S^{3}(-32) \rightarrow S,$$
and hence $\mathcal{O}_{33}$ is free with the exponents $(15,17)$. 
\end{proof}
\begin{remark}
\label{Ocon}
We can construct more free line arrangements using point-line operators as follows.
For $n\geq 9$, consider a regular $n$-gon arrangement $\calc_n$ which is the union of the lines determined by sides of a regular $n$-gon. Define $k=n/2$ if $n$ is even and $k=(n-1)/2$ if $n$ is odd. 
As it was explained in \cite{KR}, the line arrangement $\calc=\Lambda_{\{2\},\, k-1}(\calc_n)$ is
the union of lines $\calc_n$, its $n$ lines of symmetries and the line at infinity. 
It is well-known that $\calc$ is a free simplicial line arrangement.
In the light of our discussion here, we get the line arrangement $\calo_{33}$ as $\calo_{33}=\Lambda_{\{2\}, \, k-1}(\calc_8)$, where $k=8/2=4$.
Using the same arguments as in Theorem \textbf{B}, we can show that the following line arrangements are also free:
$$\calo_{61}=\Lambda_{\{2\},\, 3}(\calc_{10}), \quad \quad  \calo_{49}=\Lambda_{\{2\},\, 4}(\calc_{12}),$$ 
where the lower index $m$ in $\mathcal{O}_{m}$ is the number of lines in our arrangement. 

Now we list singularities of these arrangements: for $\calo_{61}$ we have
$$n_{2} = 335, \quad n_{3} = 140, \quad n_{5} = 70, \quad n_{10}=1, \quad n_{15}=5,$$
and for $\calo_{49}$ we have
$$n_{2} = 204, \quad n_{3} = 96, \quad n_{4} = 6, \quad n_{5} = 24, \quad n_{6} = 6, \quad n_{7}=12, \quad n_{12}=1.$$
Moreover, similarly as in the case of $\mathcal{O}_{33}$, 
the line arrangements $\calo_{49}$ and $\calo_{61}$ are rigid. 
These arrangements can be defined over $\mathbb{Q}(\sqrt{5})$, $\mathbb{Q}(\sqrt{3})$, respectively. While regular a $12$-gon can be defined 
over $\mathbb{Q}(\sqrt{3})$, a regular $10$-gon is defined over a 
quadratic extension of $\mathbb{Q}(\sqrt{5})$. However, and somehow surprisingly, it is possible to find a projective transformation that sends it to a line 
arrangement defined over $\mathbb{Q}(\sqrt{5})$, and due to this reason $\calo_{61}$ 
can also be defined over $\mathbb{Q}(\sqrt{5})$. 

We tried to obtain other free line arrangements as images via point-line 
operators applied to regular $n$-gons $\calc_n$ with $n\leq 22$, but we could not find further examples. 
\begin{remark}
It was pointed out to us by Lukas K\"uhne that the arrangements $\mathcal{H}_{57}$ and $\mathcal{O}_{33}$ are not only free, but \textbf{divisionally free} -- see \cite{Abe} for all the necessary details on divisionally free arrangements.
\end{remark}
\end{remark}
Now we can pass to Theorem \textbf{C}.
\begin{proof}
In \cite{KR}, K\"uhne and the second author construct moduli spaces $\mathcal{R}$ of 
line arrangements $\mathcal{L}$ that are stable under some line operator $\Lambda$, i.e., 
if $\mathcal{L}$ is such a line arrangement, then $\Lambda(\mathcal{L})$ is a line arrangement
with the same combinatorics as $\mathcal{L}$, therefore there is an action of $\Lambda$ on $\mathcal{R}$. 
Among these moduli spaces, one is such that $\mathcal{R}$ is (birational 
to) $\mathbb{P}^2$, and then the action of $\Lambda$ on $\mathbb{P}^2$ is
by a rational self-map $\Lambda:\mathbb{P}^{2}\dashrightarrow \mathbb{P}^{2}$
given by {\footnotesize{}
\begin{equation}
\begin{array}{c}
\Lambda(x:y:z)=\bigg(-x^{6}-4x^{5}y-6x^{4}y^{2}-x^{4}yz-4x^{3}y^{3}-2x^{3}y^{2}z-2x^{3}yz^{2}-x^{2}y^{4}-x^{2}y^{3}z-3x^{2}y^{2}z^{2}\\
-x^{2}yz^{3}+x^{2}z^{4}-xy^{3}z^{2}-xy^{2}z^{3}\,:\,x^{6}+4x^{5}y-2x^{5}z+6x^{4}y^{2}-4x^{4}yz+4x^{3}y^{3}-2x^{3}y^{2}z\\
-4x^{3}yz^{2}+2x^{3}z^{3}+x^{2}y^{4}-6x^{2}y^{2}z^{2}+4x^{2}yz^{3}-x^{2}z^{4}-4xy^{3}z^{2}+2xy^{2}z^{3}-y^{4}z^{2}\,:\,\\
x^{6}+2x^{5}y+x^{4}y^{2}+x^{4}yz+2x^{3}y^{2}z-2x^{3}yz^{2}+x^{2}y^{3}z-3x^{2}y^{2}z^{2}\\
+x^{2}yz^{3}-x^{2}z^{4}-xy^{3}z^{2}+xy^{2}z^{3}-2xyz^{4}-y^{2}z^{4}\bigg).
\end{array}
\end{equation}
}
The indeterminacy points of $\Lambda$ are the following  $9$ points 
\begin{multline*}
 \mathcal{B} = \{(0:0:1), (-1:0:1), (-1:2:1), (1:0:1), (1:-2:1), (0:1:0), \\ (-1:1:0), (e^{2}:1:1), (e:1:1)\},   
\end{multline*}
where $e^{2}+e+1=0$.

There exists a unique pencil of cubics through these $9$ points, with $6$ 
degenerate fibers, each of which is the union of a conic and a line.
The arrangement $\mathcal{CL}\subset \mathbb{P}^{2}_{\mathbb{C}}$ that we are 
considering is the union of these lines and conics, and it is given by the following defining polynomial:
\begin{multline*}
    Q(x,y,z)= (x^{2} + 2xy + y^{2} - xz)(x^2 + xy + 2yz - z^2 )(x^2 + xz + yz)(x^2 + xy + z^2 )\cdot \\ (x^2 + 2xy - xz + yz)(x^2 - y^2 + xz + 2yz)(x + z)(2x + y)(x + y - z)y(x + y + z)(x - z).
\end{multline*}
The aforementioned pencil is generated by cubics and each cubic is the union of the $k^{th}$ line and the $k^{th}$ conic with $k\in \{1,\ldots,6\}$.

By the construction, the arrangement $\mathcal{CL}$ has the following intersection points
$$n_{2} = 12, \quad  n_{6}=9.$$
Our arrangement is of a pencil-type and all singularities of the arrangement are quasi-homogeneous, which follows from \cite{DJP}. Having the defining polynomial of $\mathcal{CL}$, we can compute the minimal free resolution of the associated Milnor algebra, namely
$$0\rightarrow S(-30) \oplus S(-21) \rightarrow S(-17)^3 \rightarrow S,$$
and hence $\mathcal{CL}$ is free with exponents $(d_{1}, d_{2}) = (4,13)$.

Using script \verb}mono3f} described in \cite{DimcaSc}, we can compute the Alexander polynomial of $\mathcal{CL}$. The output of the script is as follows (here we present only the information necessary for further calculations). 

\begin{center}
\footnotesize
\begin{BVerbatim} 
===============================
    q                 n_2(q)   
-------------------------------
    3                   0         
    4                   0         
    5                   0         
    6                   1        
    7                   0        
    8                   0        
    9                   2        
   10                   0        
   11                   0       
   12                   3        
   13                   0        
   14                   0        
   15                   4        
   16                   0        
   17                   0        
   18                  11        
\end{BVerbatim}
\end{center}
Based on the above table and using formula \eqref{multmo}, we can compute the Alexander polynomial, which has the form
$$\triangle(t) = (t^3+1)^{4}(t^{2}+t+1)^{2}(t-1)^{11},$$
and this shows that $\triangle(t)$ admits roots of unity of order $6$.
\end{proof}
\begin{remark}
The first example of a conic-line arrangement such that its Alexander polynomial admits $6$th roots of unity was presented in \cite{DPS}, and that example is constructed by using an Halphen pencil of index $2$. Our example fits into that picture, since $\mathcal{CL}$ is a pencil-type conic-line arrangement, but it has completely different geometric origins since in our case we have a pencil of cubics and the used Halphen pencil of index $2$ is generated by sextics. It is worth noticing that all known examples of conic-line arrangements with Alexander polynomials having roots of unity of order $6,7,8$ are constructed using suitable pencils of plane curves.
\end{remark}

\section{Unexpected curves and the Strong Lefschetz Property}
Now we would like to study some properties of point configurations determined by the duals of lines in arrangements $\mathcal{H}_{57}$ and $\mathcal{O}_{33}$. We start with the notion of unexpected curves that was introduced in \cite{Cook}.
\begin{definition}
Let $Z = \{p_{1}, \ldots, p_{d}\} \subset \mathbb{P}^{2}_{\mathbb{C}}$ be a finite set of mutually distinct points. We say that the set $Z$ admits an unexpected curve $C$ of degree $j\geq 2$ if
$$h^{0}(\mathbb{P}^{2}_{\mathbb{C}}, \mathcal{O}_{\mathbb{P}^{2}_{\mathbb{C}}}(j) \otimes \mathcal{I}(Z + (j-1)q)) > {\rm max} \bigg( 0, h^{0}(\mathbb{P}^{2}_{\mathbb{C}}(j) \otimes \mathcal{I}(Z))-\binom{j}{2}\bigg),$$
where $q$ is a generic point and $\mathcal{I}(Z + (j-1)q)$ is the ideal sheaf of functions vanishing along $Z$ and vanishing of order $j-1$ at $q$.
\end{definition}
Let us denote by $\mathcal{A}_{Z} \, : f_{Z}=0$ the line arrangement determined by the duals to points $Z$ and we define by $m(\mathcal{A}_{Z})$ the maximal multiplicity of an intersection point in $\mathcal{A}_{Z}$. In order to check whether our set of points $Z$ admits an unexpected curve, we can use the following result due to Dimca \cite{unexp}.

\begin{theorem}
The set of points $Z$ admits an unexpected curve if and only if 
$$m(\mathcal{A}_{Z}) \leq d_{1} +1 < \frac{d}{2},$$
where $d_{1}$ denotes the first exponent in the minimal resolution of the Milnor algebra associated with $\mathcal{A}_{Z}$. If these conditions are fulfilled, then $Z$ admits an unexpected curve of degree $j$ if and only if
$$d_{1} < j \leq d - d_{1}-2.$$
\end{theorem}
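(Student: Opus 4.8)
The plan is to reduce the statement to a question about the splitting type, on a general line, of the logarithmic (Jacobian syzygy) bundle of the dual line arrangement $\mathcal{A}_Z$, following the dictionary between unexpected curves and rank-two bundles developed by Faenzi--Vall\`es and Cook--Harbourne--Schenck--Teitler. Write $\mathcal{A}_Z : f_Z = 0$ for the arrangement of the $d$ lines dual to $Z$, let $\mathrm{AR}(f_Z) = \{(a,b,c) : a\,\partial_x f_Z + b\,\partial_y f_Z + c\,\partial_z f_Z = 0\}$ be its graded module of Jacobian syzygies, and recall that $\mathrm{mdr}(f_Z) := \min\{e : \mathrm{AR}(f_Z)_e \neq 0\}$ equals the first exponent $d_1$ appearing in the minimal free resolution of the Milnor algebra of $\mathcal{A}_Z$. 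Sheafifying $\mathrm{AR}(f_Z)$ gives a reflexive, hence locally free, rank-two sheaf $E_Z$ on $\mathbb{P}^2$ with $c_1(E_Z) = 1-d$, and its restriction to a general line $\ell$ splits as $E_Z|_\ell \cong \mathcal{O}_\ell(-a_Z) \oplus \mathcal{O}_\ell(-b_Z)$ with $a_Z \leq b_Z$ and $a_Z + b_Z = d-1$.

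First I would record the cohomological translation of the definition: a curve of degree $j$ through $Z$ with a point of multiplicity $j-1$ at a generic point $q$ corresponds, via the incidence/apolarity correspondence, to a global section of $E_Z|_{\ell_q}(j-1)$, where $\ell_q$ is the line dual to $q$; so the left-hand side of the inequality defining unexpectedness matches $h^0(\ell_q, E_Z|_{\ell_q}(j-1))$, while the right-hand side, $\max(0, h^0(\mathcal{O}(j)\otimes\mathcal{I}(Z)) - \binom{j}{2}) = \max(0, 2j+1-d)$, is the value this cohomology would take were the splitting type balanced. Since $E_Z|_{\ell_q}(j-1) = \mathcal{O}_{\ell_q}(j-1-a_Z) \oplus \mathcal{O}_{\ell_q}(j-1-b_Z)$, an elementary computation on $\mathbb{P}^1$ shows $h^0$ exceeds $\max(0,2j+1-d)$ exactly when $a_Z + 1 \leq j \leq b_Z - 1$. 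Using $a_Z + b_Z = d-1$, this range is $a_Z < j \leq d - a_Z - 2$, and it is nonempty if and only if $b_Z - a_Z \geq 2$, equivalently $a_Z + 1 < d/2$.

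The remaining point — the technical heart — is to identify $a_Z = d_1$ under the hypotheses. One always has $a_Z \leq \mathrm{mdr}(f_Z) = d_1$; the reverse inequality may fail for arbitrary curves, so I would invoke (or reprove using du~Plessis--Wall type bounds on $\mathrm{mdr}$ together with the Grauert--M\"ulich theorem) that $a_Z = \mathrm{mdr}(f_Z)$ whenever $\mathrm{mdr}(f_Z) < d/2$, which is exactly the content of the hypothesis $d_1 + 1 < d/2$. The role of $m(\mathcal{A}_Z) \leq d_1 + 1$ is to exclude the degenerate situation where $Z$ has more than $d_1+1$ collinear points: in that case a B\'ezout/component-splitting argument shows the putative unexpected curve must split off the line through those points, which forces the naive count and kills unexpectedness; hence the multiplicity bound is necessary, and combined with $d_1+1 < d/2$ it is precisely what makes the bundle-theoretic count sharp. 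Assembling the cohomological translation, the $\mathbb{P}^1$ splitting-type computation, the identification $a_Z = d_1$, and the genericity ensured by the multiplicity bound yields both the biconditional criterion and the sharp range $d_1 < j \leq d - d_1 - 2$. The main obstacle I anticipate is the clean proof of $a_Z = d_1$ and the careful bookkeeping of the twist in the incidence correspondence; once those are in place the rest is routine computation on a line.
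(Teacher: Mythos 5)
The paper does not prove this statement: it is quoted verbatim as a known result of Dimca \cite{unexp}, so there is no internal proof to compare against. Your sketch correctly reconstructs the argument that the cited source actually uses, namely the Cook--Harbourne--Migliore--Nagel splitting-type criterion for the rank-two syzygy (logarithmic) bundle of $\mathcal{A}_Z$ on a general line, combined with Dimca's identification of the generic splitting type with $(d_1, d-1-d_1)$ under the hypothesis $d_1+1 < d/2$; the $\mathbb{P}^1$ cohomology count giving the range $d_1 < j \leq d-d_1-2$ and your reading of the role of $m(\mathcal{A}_Z)\leq d_1+1$ are both right. The only point you correctly flag as needing care --- the equality $a_Z = \mathrm{mdr}(f_Z)$ when $\mathrm{mdr}(f_Z) < d/2$ --- is indeed the nontrivial input, and invoking it rather than reproving it is exactly what the cited theorem does.
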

Using the above result, we deduce the following corollary.
\begin{corollary}
\begin{enumerate}
\item[a)] Let $Z_{57}$ be the set of points determined by the duals to lines in $\mathcal{H}_{57}$. Then $Z_{57}$ admits unexpected curves of degrees $j \in \{26, 27, 28, 29, 30\}$.
\item[b)] Let $Z_{33}$ be the set of points determined by the duals to lines in $\mathcal{O}_{33}$. Then $Z_{33}$ admits an unexpected curve of degree $j=16$.
\end{enumerate}
\end{corollary}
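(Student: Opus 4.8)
The plan is to apply Dimca's theorem (the Theorem stated immediately above) directly, after one bookkeeping observation: the dual operator $\mathcal{D}$ is an involution, so the line arrangement $\mathcal{A}_Z$ determined by the duals of a point set $Z$ that itself arises as $Z=\mathcal{D}(\mathcal{L})$ is just $\mathcal{L}$ again. In our situation $Z_{57}=\mathcal{D}(\mathcal{H}_{57})$ and $Z_{33}=\mathcal{D}(\mathcal{O}_{33})$, hence $\mathcal{A}_{Z_{57}}=\mathcal{H}_{57}$ and $\mathcal{A}_{Z_{33}}=\mathcal{O}_{33}$. Consequently every quantity entering the criterion — the number $d$ of points, the maximal multiplicity $m(\mathcal{A}_Z)$ of a singular point of the dual arrangement, and the first exponent $d_1$ of the associated Milnor algebra — has already been recorded in Theorems A and B, so nothing new needs to be computed.

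For part a) I would set $d=57$; from the singularities $n_2=252$, $n_3=108$, $n_4=72$, $n_8=21$ of $\mathcal{H}_{57}$ one reads off $m(\mathcal{A}_{Z_{57}})=8$, and Theorem A gives exponents $(25,31)$, so $d_1=25$. Then I check the two inequalities of the theorem: $m(\mathcal{A}_{Z_{57}})=8\le d_1+1=26$ and $d_1+1=26<d/2=28.5$; both hold, so $Z_{57}$ admits an unexpected curve, and the admissible degrees are exactly the integers $j$ with $d_1<j\le d-d_1-2$, i.e. $25<j\le 30$, which is $j\in\{26,27,28,29,30\}$.

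For part b) I would set $d=33$; from the singularities $n_2=108$, $n_3=40$, $n_5=16$, $n_8=5$ of $\mathcal{O}_{33}$ one gets $m(\mathcal{A}_{Z_{33}})=8$, and Theorem B gives exponents $(15,17)$, so $d_1=15$. The two inequalities read $m(\mathcal{A}_{Z_{33}})=8\le d_1+1=16$ and $d_1+1=16<d/2=16.5$; both hold, so $Z_{33}$ admits an unexpected curve, of degree $j$ with $15<j\le 33-15-2=16$, i.e. $j=16$.

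The only step requiring genuine care is the involution remark: it is what allows us to substitute the combinatorial and homological data of the already-constructed arrangements $\mathcal{H}_{57}$, $\mathcal{O}_{33}$ for the a priori unknown data of the dual point sets $Z_{57}$, $Z_{33}$; once that identification is made, the argument is a pure verification of two pairs of numerical inequalities. The tightest margin — and hence the place I would double-check that the \emph{strict} inequality $d_1+1<d/2$ is really what the theorem demands — is the bound $16<16.5$ occurring for $\mathcal{O}_{33}$.
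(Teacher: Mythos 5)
Your proposal is correct and follows essentially the same route as the paper: both read off $m(\mathcal{A}_Z)=8$ and $d_1$ from the data of Theorems A and B, verify the two inequalities of Dimca's criterion, and extract the admissible degree range $d_1<j\le d-d_1-2$. The only difference is that you make the duality identification $\mathcal{A}_{Z_{57}}=\mathcal{H}_{57}$, $\mathcal{A}_{Z_{33}}=\mathcal{O}_{33}$ explicit, which the paper leaves implicit.
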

\begin{proof}
Recall that the arrangement $\mathcal{H}_{57}$ is free with exponents $(d_{1},d_{2}) = (25,31)$ and $m(\mathcal{H}_{57}) = 8$. Using our criterion, the set of points $Z_{57}$ admits an unexpected curve since

$$8=m(\mathcal{H}_{57}) \leq 26 < \frac{d}{2}= \frac{57}{2}.$$
Moreover, we can find admissible degrees of unexpected curves determined by $Z_{57}$, namely $j \in \{26, 27, 28, 29, 30\}$.

Recall also that the arrangement $\mathcal{O}_{33}$ is free with exponents $(d_{1},d_{2}) = (15,17)$ and $m(\mathcal{O}_{33}) = 8$. Using our criterion, the set of points $Z_{33}$ admits an unexpected curve since

$$8 = m(\mathcal{O}_{33}) \leq 16 < \frac{d}{2} = \frac{33}{2} .$$
Moreover, the only admissible degree of an unexpected curve determined by $Z_{33}$ is $j = 16$.
\end{proof}

Now we pass to the Strong Lefschetz property.
\begin{definition}
An artinian algebra $A = S/I$ satisfies the strong Lefschetz property (SLP) at range $k$ in degree $d$ if, for a general linear form $L$, the homomorphism 
$$\times L^{k} \, : [A]_{d} \rightarrow [A]_{d+k}$$
has maximal rank. Otherwise, we say that $A$ fails the SLP at range $k$ in degree $d$.
\end{definition}

In that context, one has the following result \cite[Theorem 7.5]{Cook} 

\begin{theorem}
Let $\mathcal{A}_{Z} \, :  f_{Z}=0$ be a line arrangement in $\mathbb{P}^{2}_{\mathbb{C}}$, where $f_{Z} = \ell_{1} \cdots \ell_{d}$, and let $Z$ be the set of points in $\mathbb{P}^{2}_{\mathbb{C}}$ dual to these lines. Then $Z$ admits an unexpected curve of degree $j+1$ if and only if $S/(\ell_{1}^{j+1}, \ldots, \ell_{d}^{j+1})$ fails the SLP in range $2$ and degree $j-1$.
\end{theorem}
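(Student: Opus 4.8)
My plan is to prove the equivalence by translating both sides into the language of Macaulay inverse systems, where they become two readings of a single rank--nullity count. The first step is to fix the apolarity dictionary: let $S=\mathbb{C}[x,y,z]$ act by differentiation on the dual ring $T=\mathbb{C}[X,Y,Z]$, so that for every homogeneous ideal $I\subset S$ the graded pieces $[S/I]_{t}$ and $[I^{\perp}]_{t}$ are naturally dual vector spaces, and --- this is the point that makes the whole argument work --- multiplication by a linear form $L\in S_{1}$ on $S/I$ is the transpose of the differential action $L\circ(-)$ on $I^{\perp}\subset T$. Under this dictionary the dual points $Z=\{p_{1},\dots,p_{d}\}$ of the lines $\ell_{i}$ become honest points of $\operatorname{Proj}T$, and a general linear form $L\in S_{1}$ is, up to scalar, a general point $q\in\operatorname{Proj}T$ --- precisely the generic point appearing in the definition of an unexpected curve.

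The technical core is computing $I^{\perp}$ for $I=(\ell_{1}^{j+1},\dots,\ell_{d}^{j+1})$. Here I would change coordinates so that $p_{i}=[1:0:0]$, observe that $\ell_{i}^{j+1}$ then acts on $T$ as $\partial_{X}^{\,j+1}$, and note that $\partial_{X}^{\,j+1}$ annihilates exactly the degree-$t$ forms of $X$-degree at most $j$, that is, those vanishing to order $\ge t-j$ at $p_{i}$. Intersecting over $i$ yields $[I^{\perp}]_{t}=[\mathcal{I}_{(t-j)Z}]_{t}$ for $t\ge j$, and $[I^{\perp}]_{t}=T_{t}$ for $t<j+1$. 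In particular $[I^{\perp}]_{j-1}=T_{j-1}$ has dimension $\binom{j+1}{2}$, while $[I^{\perp}]_{j+1}=H^{0}(\mathbb{P}^{2},\mathcal{I}_{Z}(j+1))$.

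Next I would read off the Lefschetz map. By transposition, $\times L^{2}\colon[S/I]_{j-1}\to[S/I]_{j+1}$ has the same rank as $L^{2}\circ(-)\colon H^{0}(\mathcal{I}_{Z}(j+1))\to T_{j-1}$, whose kernel is the space of degree-$(j+1)$ forms through $Z$ that in addition vanish to order $\ge j$ at $q$, namely $H^{0}(\mathcal{I}_{Z+jq}(j+1))$. A one-line rank--nullity argument then shows that $\times L^{2}$ has maximal rank if and only if $h^{0}(\mathcal{I}_{Z+jq}(j+1))=\max\bigl(0,\,h^{0}(\mathcal{I}_{Z}(j+1))-\binom{j+1}{2}\bigr)$. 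Negating this equality --- and using upper semicontinuity of $h^{0}(\mathcal{I}_{Z+jq}(j+1))$ in $q$ to identify ``$L$ general'' with ``$q$ generic'' --- gives on the one side that $S/(\ell_{1}^{j+1},\dots,\ell_{d}^{j+1})$ fails the SLP in range $2$ and degree $j-1$, and on the other exactly the defining inequality for $Z$ admitting an unexpected curve of degree $j+1$.

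I expect the main obstacle to be not any isolated deep step but the duality bookkeeping: staying consistent about which projective plane carries $Z$, the lines $\ell_{i}$, the unexpected curve, and the point $q$; checking that the transpose identity holds for the power $L^{2}$ and that its image lands in the correct graded piece; and confirming that a general linear form of $S$ corresponds to a general point of the plane containing $Z$, so that the two genericity hypotheses really match. The degenerate range $t<j+1$, where $I^{\perp}$ is all of $T_{t}$ and the naive Hilbert-function formula breaks down, also needs separate handling but causes no real trouble.
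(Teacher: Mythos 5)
The paper does not prove this statement at all: it is quoted verbatim as \cite[Theorem 7.5]{Cook} and used as a black box. Your proposal is a correct reconstruction of the standard proof from that reference, via Macaulay inverse systems and the Emsalem--Iarrobino identification $[I^{\perp}]_{t}=[\mathcal{I}_{(t-j)Z}]_{t}$ for $I=(\ell_{1}^{j+1},\dots,\ell_{d}^{j+1})$, the adjointness of $\times L^{2}$ on $S/I$ with $L^{2}\circ(-)$ on $I^{\perp}$, and the rank--nullity translation of ``maximal rank'' into the defining inequality for an unexpected curve of degree $j+1$; the genericity matching via semicontinuity is also handled correctly.
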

From that perspective, we have the following corollary.
\begin{corollary}
\begin{enumerate}
\item[a)] The algebra $A = S/(\ell_{1}^{j+1}, \ldots , \ell_{57}^{j+1})$ associated with the arrangement $\mathcal{H}_{57}$ fails the SLP in range $2$ and degree $j-1$ with $j \in \{25,26,27,28,29\}$.
\item[b)] The algebra $A = S/(\ell_{1}^{16}, \ldots , \ell_{33}^{16})$ associated with the arrangement $\mathcal{O}_{33}$ fails the SLP in range $2$ and degree $14$.
\end{enumerate}
\end{corollary}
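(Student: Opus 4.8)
The plan is to read this off directly from \cite[Theorem 7.5]{Cook} together with the unexpected-curve statement of the preceding corollary, the only real content being the matching of degree shifts. Recall that for a line arrangement $\mathcal{A}_Z : f_Z = 0$ with $f_Z = \ell_1 \cdots \ell_d$ and dual point set $Z$, that theorem asserts that $Z$ admits an unexpected curve of degree $j+1$ if and only if $S/(\ell_1^{j+1}, \ldots, \ell_d^{j+1})$ fails the SLP in range $2$ and degree $j-1$. Both $\mathcal{H}_{57}$ and $\mathcal{O}_{33}$ are line arrangements, so this equivalence applies without further hypotheses.

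First I would treat part a). By the preceding corollary, the point set $Z_{57}$ dual to the $57$ lines of $\mathcal{H}_{57}$ admits an unexpected curve of degree $D$ exactly for $D \in \{26, 27, 28, 29, 30\}$. Setting $D = j+1$, this range becomes $j \in \{25, 26, 27, 28, 29\}$, and applying \cite[Theorem 7.5]{Cook} with $d = 57$ to each such $j$ translates the existence of the corresponding unexpected curve into the failure of the SLP in range $2$ and degree $j-1$ for $A = S/(\ell_1^{j+1}, \ldots, \ell_{57}^{j+1})$; this is precisely the assertion in a). Part b) is the same step with $d = 33$: the preceding corollary gives an unexpected curve of $Z_{33}$ of degree $16$ (and of no other degree), so taking $16 = j+1$, i.e. $j = 15$, \cite[Theorem 7.5]{Cook} shows that $S/(\ell_1^{16}, \ldots, \ell_{33}^{16})$ fails the SLP in range $2$ and degree $j-1 = 14$.

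The argument has essentially no obstacle; the one point demanding care is the bookkeeping of the two shifts --- the passage from the degree of the unexpected curve (called $j$ in the preceding corollary) to the exponent $j+1$ in the power ideal, and then to the degree $j-1$ at which the Lefschetz map degenerates. Once these are lined up, the corollary is immediate, since the substantive work --- computing $d_1$ from the freeness of $\mathcal{H}_{57}$ and $\mathcal{O}_{33}$ and invoking Dimca's criterion to determine the admissible degrees of unexpected curves --- has already been carried out in proving the preceding corollary.
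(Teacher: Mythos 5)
Your proposal is correct and matches the paper's intent exactly: the paper gives no explicit proof, treating the corollary as an immediate consequence of \cite[Theorem 7.5]{Cook} combined with the preceding corollary on unexpected curves, which is precisely the translation (with the shift $D=j+1$ giving $j\in\{25,\ldots,29\}$ for $\mathcal{H}_{57}$ and $j=15$ for $\mathcal{O}_{33}$) that you carry out.
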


\section*{Acknowledgement}
The idea behind this paper originated when both authors visited the Max Planck Institute for Mathematics in Bonn, and we are very grateful for the excellent working conditions and hospitality.

We would like to thank Jean Vall\`es and Lukas K\"uhne for useful comments.

Piotr Pokora is supported by the National Science Centre (Poland) Sonata Bis Grant  \textbf{2023/50/E/ST1/00025}. For the purpose of Open Access, the author has applied a CC-BY public copyright licence to any Author Accepted Manuscript (AAM) version arising from this submission.

\vskip 0.5 cm
\bigskip
Piotr Pokora,
Department of Mathematics,
University of the National Education Commission Krakow,
Podchor\c a\.zych 2,
PL-30-084 Krak\'ow, Poland. \\
\nopagebreak
\textit{E-mail address:} \texttt{piotr.pokora@uken.krakow.pl}

\bigskip
Xavier Roulleau,
Universit\'e d'Angers, 
CNRS, LAREMA, SFR MATHSTIC, 
F-49000 Angers, France. \\
\nopagebreak
\textit{E-mail address:} \texttt{xavier.roulleau@univ-angers.fr}
\end{document}